\newtheorem{theorem}{Theorem}[section]
\newtheorem{definition}[theorem]{Definition}
\newtheorem{lemma}[theorem]{Lemma}
\newtheorem{example}[theorem]{Example}
\newtheorem{remark}[theorem]{Remark}
\definecolor{pink}{rgb}{1, .75, .8}
\definecolor{lgrey}{gray}{.85}% \colorbox{lgrey}{}
\def\defineTColor#1#2{%
 \newpsstyle{#1}{%
  fillstyle=vlines,hatchcolor=#2,
  hatchwidth=0.1\pslinewidth,
  hatchsep=1\pslinewidth}%
  }
\newcommand{\sn}{\mathop{\delta}\limits^{\doublewedge}}
\newcommand{\CL}{\mbox{CL}}
\newcommand{\cl}{\mbox{cl}}
\newcommand{\Int}{\mbox{int}}
\begin{document}

\title[Strongly near proximity \& hyperspace topology]{Strongly near proximity\\ \& hyperspace topology}

\author[J.F. Peters]{J.F. Peters$^{\alpha}$}
\email{James.Peters3@umanitoba.ca, cguadagni@unisa.it}
\address{\llap{$^{\alpha}$\,}Computational Intelligence Laboratory,
University of Manitoba, WPG, MB, R3T 5V6, Canada and
Department of Mathematics, Faculty of Arts and Sciences, Ad\i yaman University, 02040 Ad\i yaman, Turkey}
\author[C. Guadagni]{C. Guadagni$^{\beta}$}
\address{\llap{$^{\beta}$\,}Computational Intelligence Laboratory,
University of Manitoba, WPG, MB, R3T 5V6, Canada and
Department of Mathematics, University of Salerno, via Giovanni Paolo II 132, 84084 Fisciano, Salerno , Italy}
\thanks{The research has been supported by the Natural Sciences \&
Engineering Research Council of Canada (NSERC) discovery grant 185986.}

\subjclass[2010]{Primary 54E05 (Proximity); Secondary 54B20 (Hyperspaces)}

\date{}

\dedicatory{Dedicated to the Memory of Som Naimpally}

\begin{abstract}
This article introduces strongly near proximity $\sn$, which represents a new kind of proximity called \emph{almost proximity}.  A main result in this paper is the introduction of a hit-and-miss topology on $\mbox{CL}(X)$, the hyperspace of nonempty closed subsets of $X$, based on the strongly near proximity.
\end{abstract}

\keywords{Hit-and-Miss Topology, Hyperspaces, Proximity, Strongly Far, Strongly Near}

\maketitle

\section{Introduction}
Recently, we introduced \emph{strongly far proximities}~\cite{PetersGuadagni2015stronglyFar}.  Strongly far proximities make it possible to distinguish between a weaker form of farness and a stronger one associated with the \emph{Efremovi\v c property}. In this article, we introduce a framework that enables us to single out when two sets are \emph{near} and when they are \emph{strongly near}.  This article also introduces a kind of \emph{hit and far-miss} topology on the hyperspace of all non-empty closed subsets $\CL(X)$ of a topological space $X$.

\section{Preliminaries}
Recall how a \textit{Lodato proximity} is defined~\cite{Lodato1962,Lodato1964,Lodato1966} (see, also, \cite{Naimpally2009,Naimpally1970}).

\begin{definition} 
Let $X$ be a nonempty set. A \textit{Lodato proximity $\delta$} is a relation on $\mathscr{P}(X)$ which satisfies the following properties for all subsets $A, B, C $ of $X$ :
\begin{itemize}
\item[P0)] $A\ \delta\ B \Rightarrow B\ \delta\ A$
\item[P1)] $A\ \delta\ B \Rightarrow A \neq \emptyset $ and $B \neq \emptyset $
\item[P2)] $A \cap B \neq \emptyset \Rightarrow  A\ \delta\ B$
\item[P3)] $A\ \delta\ (B \cup C) \Leftrightarrow A\ \delta\ B $ or $A\ \delta\ C$
\item[P4)] $A\ \delta\ B$ and $\{b\}\ \delta\ C$ for each $b \in B \ \Rightarrow A\ \delta\ C$
\end{itemize}
Further $\delta$ is \textit{separated }, if 
\begin{itemize}
\item[P5)] $\{x\}\ \delta\ \{y\} \Rightarrow x = y$.
\end{itemize}
\end{definition}

\noindent When we write $A\ \delta\ B$, we read "$A$ is near to $B$", while when we write $A \not \delta B$ we read "$A$ is far from $B$".
A \emph{basic proximity} is one that satisfies the \v{C}ech axioms $P0)-P3)$~\cite[\S 2.5, p. 439]{Cech1966}.
\textit{Lodato proximity} or \textit{LO-proximity} is one of the simplest proximities. We can associate a topology with the space $(X, \delta)$ by considering as closed sets the ones that coincide with their own closure, where for a subset $A$ we have
\[
\mbox{cl} A = \{ x \in X: x\ \delta\ A\}.
\]
This is possible because of the correspondence of Lodato axioms with the well-known Kuratowski closure axioms~\cite[Vol. 1,\S 4.1, 20-21]{Kuratowski1958}. 

By considering the gap between two sets in a metric space ( $d(A,B) = \inf \{d(a,b): a \in A, b \in B\}$ or $\infty$ if $A$ or $B$ is empty ), Efremovi\v c introduced a stronger proximity called \textit{Efremovi\v c proximity} or \textit{EF-proximity}~\cite{Efremovich1951,Efremovich1952}.  

\begin{definition}
An \emph{EF-proximity} is a relation on $\mathscr{P}(X)$ which satisfies $P0)$ through $P3)$ and in addition 
\[A \not\delta B \Rightarrow \exists E \subset X \hbox{ such that } A \not\delta E \hbox{ and } X\setminus E \not\delta B \hbox{ EF-property.}\]
\end{definition}

A topological space has a compatible EF-proximity if and only if it is a Tychonoff space~\cite[\S 5.3]{Naimpally2013} (see, also, S. Willard on Tychonoff spaces~\cite[\S 8, 52-53]{Willard1970}).   
%A \emph{Tychonoff space} on $\prod X_{\alpha}$ is a completely regular space, obtained by taking as a base for the open sets, sets of the form $\prod U_{\alpha}$, where $U_{\alpha}$ is an open set in $X_{\alpha}$ for each $\alpha$ in the index set $A$~\cite[\S 8, 52-53]{Willard1970}.  A topological space $X$ is \emph{regular} if and only if, whenever $A$ is closed in $X$ and $x\not\in A$, there are disjoint open sets $U$ and $V$ such that $x\in U$ and $A\subset V$~\cite[\S 14.1, 92]{Willard1970}.

Any proximity $\delta$ on $X$ induces a binary relation over the powerset exp $X,$ usually denoted as $\ll_\delta$ and  named  the  {\it natural strong inclusion associated with } $\delta,$ by declaring that $ A$ is {\it strongly included} in $B, \ A \ll_{\delta} B,$ when $A$ is far from the complement of $B,\mbox{\emph{i.e.}}, A \not\delta X\setminus B.$

By strong inclusion, the \textit{Efremovi\v c property} for $ \delta $ can be written also as a betweenness property   \

 \centerline {  (EF) \  \   \  \  If $A \ll_{\delta} B,$  then there exists some $C$ such that $A \ll_{\delta} \  C \ll_{\delta} \ B$.} \  \ \

A pivotal example of \emph{EF-proximity} is the \emph{metric proximity} in a metric space $(X,d)$ defined by 
\[
A\ \delta\ B \Leftrightarrow d(A,B) = 0.
\]
That is, $A$ and $B$  {\it either intersect or are asymptotic}: for each natural number $n$ there  is a  point  $a_n$ in $A $ and a point  $b_n$ in $B$ such that $d(a_{n},b_{n}) < \frac{1}{n}$.\\

\subsection{Hit and far-miss topologies}
Let $\CL(X)$ be  the hyperspace of all non-empty closed subsets of  a space  $X.$
 {\it Hit and miss} and {\it hit and far-miss}  topologies on $\CL(X)$  are obtained by the join of two halves. Well-known examples are Vietoris topology~\cite{Vietoris1921,Vietoris1922,Vietoris1923,Vietoris1927} (see, also,~\cite{Beer1993,Beer1993hit,DiConcilio2013action,DiConcilio2000SetOpen,DiConcilio2000PartialMaps,DiConcilio1989,DiMaio2008hypertop,DiMaio1995hypertop,DiMaio1992hypertop,Som2008hit,Som2007fnspace,Lucchetti1994,Lucchetti1995,Som2006hypertopology,Guadagni2015}) and Fell topology~\cite{Fell1962HausdorfTop}.  In this article, we concentrate on an extension of Vietoris based on the strongly far proximity. \\
 
\noindent \underline{\emph{Vietoris topology}}\\
\smallskip
Let $X$ be an Hausdorff space. The \textit{Vietoris topology} on $\CL(X)$ has as subbase all sets of the form
\begin{itemize}
\item $V^- = \{E \in \CL(X): E \cap V \neq \emptyset\}$, where $V$ is an open subset of $X$,
\item $W^+ = \{C \in \CL(X): C \subset W \}$, where $W$ is an open subset of $X$.
\end{itemize}

The topology ${\tau_V}^-$ generated by the sets of the first form is called \textbf{hit part} because, in some sense, the closed sets in this family hit the open sets $V$. Instead, the topology 
${\tau_V}^+$ generated by the sets of the second form is called \textbf{miss part}, because the closed sets here miss the closed sets of the form $X \setminus W$.

The Vietoris topology is the join of the two part: $\tau_V = {\tau_V}^- \vee {\tau_V}^+$. It represents the prototype of hit and miss topologies.

The Vietoris topology was modified by Fell. He left the hit part unchanged and in the miss part, ${\tau_F}^+$ instead of taking all open sets $W$, he took only open subsets with compact complement.\\
\underline{\emph{Fell topology:}}  \ \  \  \  $\qquad \  \qquad  \ \ \   \tau_F = {\tau_V}^- \vee {\tau_F}^+$.\\

It is possible to consider several generalizations. For example, instead of taking open subsets with compact complement, for the miss part we can look at subsets running in a family of closed sets $\mathscr{B}$. So we define the {\it hit and miss topology on $\CL(X)$ associated with} ${\mathscr{B}}$ as the topology generated by the join of the hit sets $ A^{-},$  where  $A$ runs over all  open subsets of $X$, with the miss sets $A^{+}$, where   $A$ is once again an open subset of $X,$ but more,  whose  complement runs in   $\mathscr{B}$.

Another kind of generalization concerns the substitution of the inclusion present in the miss part with a strong inclusion associated with a proximity. Namely, when the space $X$ carries a proximity $\delta,$ then a proximity variation of the miss part can be displayed by replacing the miss sets with {\it far-miss sets} \ $A^{++} :=  \{ \ E \in \CL(X) : E \ll_{\delta}   A  \ \}.$

Also, in this case we can consider $A$ with the complement running in a family $\mathscr{B}$ of closed subsets of $X$. Then the {\it hit and far-miss topology , $\tau_{\delta,\mathscr{B}}$, associated with $\mathscr{B}$ } is generated by the join of the hit sets  $A^{-},$   where $A$ is open,   with far-miss sets $A^{++},$ where the complement of $A$ is in $\mathscr{B}$.

Fell topology can be considered as well as an example of hit and far-miss topology. In fact, in any EF-proximity, when a compact set is contained in an open set, it is also strongly contained.   

\setlength{\intextsep}{0pt}
\begin{wrapfigure}[9 ]{R}{0.45\textwidth}
%\fbox{
\begin{minipage}{4.5 cm}
\begin{center}
\begin{pspicture}
 %[showgrid=true]
 (0.0,1.5)(2.5,3.5)
%\psset{linewidth=0.5pt,linecolor=blue}
\psframe[linecolor=black](-0.8,0.5)(4.5,3.0)
\pscircle[linestyle=dotted,dotsep=0.05,linecolor=black,linewidth=0.05,style=Torange](0.18,1.55){0.88}
\pscircle[linestyle=dotted,dotsep=0.05,linecolor=black,linewidth=0.05,style=Tyellow](0.98,1.35){0.78}
\pscircle[linecolor=black,linestyle=solid,linewidth=0.05,style=Tgray](3.38,1.85){1.00}
\pscircle[linestyle=dotted,dotsep=0.05,linecolor=black,linewidth=0.05,style=Tyellow](2.38,2.25){0.70}
\rput(-0.5,2.8){\footnotesize  $\boldsymbol{X}$}
\rput(1.40,1.35){\footnotesize $\boldsymbol{\Int A}$}
\rput(0.08,2.15){\footnotesize $\boldsymbol{\Int B}$}
\rput(3.68,1.65){\footnotesize $\boldsymbol{E}$}
\rput(2.08,2.35){\footnotesize  $\boldsymbol{\Int D}$}
\rput(0.28,0.15){\qquad\qquad\qquad\qquad\footnotesize 
                 $\boldsymbol{\mbox{Fig.}\ 3.1.\  A\ \sn\ B,\  D\ \sn\ E}$}
								%\label{fig:stronglyNear}
 \end{pspicture}
%\caption{\footnotesize $\boldsymbol{A\sn B, C\sn E}$}
%\label{fig:stronglyNear}
\end{center}
\end{minipage}
%}
\end{wrapfigure}
\setlength{\intextsep}{2pt}

\section{Main Results}
When we consider a proximity $\delta$, it can happen that $A \delta B$ and they have empty intersection or, if $X$ is a topological space, there could be only one point in the intersection of their closures. We need something more. We want to talk about \emph{strong nearness}~\cite{Peters2015visibility} (see, also,~\cite{Peters2015VoronoiAMSJ,Peters2014KleePhelps,Peters2012notices}) for subsets that at least have some points in common. For this reason, we introduce a new kind of proximity, that we call \emph{almost proximity}.

\begin{definition}
Let $X$ be a topological space. We say that the relation $\sn$ on $\mathscr{P}(X)$ is a strongly near proximity called an \emph{almost proximity}, provided it satisfies the following axioms.
Let $A, B, C \subset X$ and $x \in X$:
\begin{itemize}
\item[N0)] $\emptyset \not\sn A, \forall A \subset X $, and \ $X \sn A, \forall A \subset X$
\item[N1)] $A \sn B \Leftrightarrow B \sn A$
\item[N2)] $A \sn B \Rightarrow A \cap B \neq \emptyset$
\item[N3)] If $\Int(B)$ and $\Int(C)$ are not equal to the empty set, $A \sn B$ or $A \sn C \ \Rightarrow \ A \sn (B \cup C)$
\item[N4)] $\mbox{int}A \cap \mbox{int} B \neq \emptyset \Rightarrow A \sn B$ \qquad \textcolor{blue}{$\blacksquare$}
\end{itemize}
\end{definition}

\noindent When we write $A \sn B$, we read $A$ is \emph{strongly near} $B$.

\begin{example}\label{ex1} \rm
A simple example of \emph{almost proximity} that is also a \emph{Lodato proximity} for nonempty sets $A,B\subset X$ is $A \sn B \Leftrightarrow A \cap B \neq \emptyset$. \qquad \textcolor{blue}{$\blacksquare$}
\end{example}

\begin{example}\label{ex2} \rm
Another example of \emph{almost proximity} is given for nonempty sets $A,B\subset X$ in Fig.~3.1, where $A \sn B \Leftrightarrow \Int (A) \cap\ \Int (B) \neq \emptyset$.  This is not a usual proximity. \qquad \textcolor{blue}{$\blacksquare$}
\end{example}

\begin{example}\label{ex3} \rm
We can define another  \emph{almost proximity} for nonempty sets $D,E\subset X$ in Fig.~3.1 in the following way: $D \sn D \Leftrightarrow E \cap\ \Int (D) \neq \emptyset$ or $\Int (E) \cap D \neq \emptyset$. \qquad \textcolor{blue}{$\blacksquare$}
\end{example}

\noindent Furthermore, for each \emph{almost proximity}, we assume the following relation:
\begin{itemize}
\item[N5)] $x \in \Int (A) \Rightarrow \{x\} \sn A$
\item[N6)] $\{x\} \sn \{y\} \Leftrightarrow x=y$  \qquad \textcolor{blue}{$\blacksquare$}
\end{itemize}
%\[
%(N5) \qquad\qquad \ \ x \in \Int (A) \Rightarrow x \sn A. 
%\] 
So, for example, if we take the almost proximity of example \ref{ex2}, we have that 
\[
A \sn B \Leftrightarrow \Int A \cap \Int B \neq \emptyset,
\] 
provided $A$ and $B$ are not singletons and if $A = \{x\}$, then $x \in \Int(B)$ and if $B$ is also a singleton, then $x=y$. It turns out that if $A \subset X$ is an open set, then each point that belongs to $A$ is strongly near $A$.\\
 
\medskip

Now we want to define a new kind of \emph{hit and far-miss topology} on $\CL(X)$ by using \emph{almost proximities}. 
%To this purpose, call 
That is, let $\tau_\delta$ be the hit and far-miss topology having as subbase the sets of the form:
\begin{itemize}
\item $V^- = \{E \in \CL(X): E \cap V \neq \emptyset\}$, where $V$ is an open subset of $X$,
\item $A^{++} =  \{ \ E \in \CL(X) : E \not\delta X\setminus  A  \ \}$, where $A$ is an open subset of $X$.
\end{itemize}

\noindent where $\delta$ is a Lodato proximity compatible with the topology on $X$,  and $\tau^\doublewedge$ the \emph{strongly hit and far-miss} topology having as subbase the sets of the form:
\begin{itemize}
\item $V^{\doublewedge} = \{E \in \CL(X): E \sn V \}$, where $V$ is an open subset of $X$,
\item $A^{++} =  \{ \ E \in \CL(X) : E \not\delta X\setminus  A  \ \}$, where $A$ is an open subset of $X$.
\end{itemize}

It is possible to define \emph{strongly hit and miss} topology in many other ways by simply mixing strongly hit sets with miss sets or also strongly miss sets in a manner similar to that given in~\cite{PetersGuadagni2015stronglyFar}.

\begin{theorem}
If the space $X$ is $T_1$, the \emph{strongly hit and far-miss} topology is an admissible topology.
\end{theorem}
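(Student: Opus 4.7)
\medskip

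\noindent\textbf{Proof proposal.} Recall that a topology on $\CL(X)$ is \emph{admissible} precisely when the singleton map
\[
i:X\longrightarrow \CL(X),\qquad i(x)=\{x\},
\]
is a homeomorphism onto its image. The $T_1$ hypothesis is used exactly to guarantee that $\{x\}$ is closed for every $x\in X$, so that $i$ is well defined. The plan is therefore to verify (i) continuity of $i$ with respect to $\tau^{\doublewedge}$, and (ii) that $i$ is an embedding, by checking both claims on a subbase.

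For continuity, I would test $i$ against the two families of subbasic sets. For the strongly-hit part, given $V$ open in $X$,
\[
i^{-1}(V^{\doublewedge})=\{x\in X:\{x\}\,\sn\, V\}.
\]
If $x\in V=\Int V$, axiom N5 gives $\{x\}\,\sn\,V$, so $V\subseteq i^{-1}(V^{\doublewedge})$. Conversely, N2 forces $\{x\}\cap V\neq\emptyset$, so $x\in V$; hence $i^{-1}(V^{\doublewedge})=V$, which is open. For the far-miss part, given $A$ open in $X$,
\[
i^{-1}(A^{++})=\{x\in X:\{x\}\not\delta\,X\setminus A\},
\]
and since $\delta$ is compatible with the topology of $X$, $\{x\}\not\delta\,X\setminus A$ iff $x\notin\cl(X\setminus A)=X\setminus A$, i.e.\ iff $x\in A$. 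Thus $i^{-1}(A^{++})=A$ is open. Continuity follows.

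For the embedding part, injectivity of $i$ is immediate since $\{x\}=\{y\}$ forces $x=y$. It remains to show $i$ is open onto $i(X)$, which again reduces to the subbase. For any open $U\subseteq X$, the computation above yields
\[
U^{\doublewedge}\cap i(X)=\{\{x\}:x\in X,\ \{x\}\,\sn\,U\}=\{\{x\}:x\in U\}=i(U),
\]
so each $i(U)$ is open in the subspace topology of $i(X)$. Since the sets $U^{\doublewedge}$ (with $U$ open) already separate points of $i(X)$, one does not even need the far-miss subbasics to see openness of $i$.

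The only genuinely non-routine step is the equivalence $\{x\}\,\sn\,V\Leftrightarrow x\in V$ for open $V$: it is exactly the interaction between N5 (giving the forward implication for points of the interior) and N2 (forcing non-empty intersection on the reverse) that makes the strongly-hit preimage coincide with $V$ rather than some larger set. Once that lemma is in hand, everything else is a direct transcription from $X$ to $i(X)$, and the conclusion that $\tau^{\doublewedge}$ is admissible follows.
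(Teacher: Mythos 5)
Your proposal is correct and follows essentially the same route as the paper: showing the canonical injection $i(x)=\{x\}$ is a homeomorphism onto its image via the identities $i^{-1}(V^{\doublewedge})=V$ and $i(U)=U^{\doublewedge}\cap i(X)$, with the far-miss part handled by compatibility of $\delta$ (which the paper simply cites as known). Your explicit use of N5 and N2 to justify $\{x\}\sn V\Leftrightarrow x\in V$ just fills in details the paper leaves implicit.
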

\begin{proof}
To achieve the result we need a $T_1$ space $X$ in order to work with closed singletons. The result about the far-miss part is already known. So look at the strongly hit part. We want that the canonical injection $i: X \rightarrow \CL(X)$, defined by $i(x)=\{x\}$, is a homeomorphism. It is continuous because $i^{-1}(V^{\doublewedge})= \{x \in X : x \sn V\}=V$. Moreover it is open in the relative topology on $i(X)$ because, for each open set $A$, $i(A)=A^{\doublewedge} \cap i(X)$.
\end{proof}

\begin{remark}
Observe that the usual hit topology is a special strongly hit topology when taking as \emph{almost proximity} either the one in example \ref{ex1} or the one in example~\ref{ex3}.  \qquad \textcolor{blue}{$\blacksquare$}
\end{remark}

Now we consider comparisons between usual \emph{hit and far miss topologies} and \emph{strongly hit and far miss} ones. To this purpose we need the following lemma.

\begin{lemma}\label{lem}
Let $A$ and $H$ be open subsets of a $T_1$ topological space $X$. Then $A^- \subseteq H^{\doublewedge}$ implies $A \subseteq H$.
\end{lemma}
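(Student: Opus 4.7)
The plan is to argue by contrapositive: assume $A \not\subseteq H$ and produce a closed set that lies in $A^{-}$ but not in $H^{\doublewedge}$. The whole proof hinges on the hypothesis that $X$ is $T_1$, which gives us that singletons $\{x\}$ are closed and therefore honest elements of $\CL(X)$; this is the only place the $T_1$ assumption enters.

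Concretely, I would pick some $x \in A \setminus H$. Since $X$ is $T_1$, the singleton $\{x\}$ belongs to $\CL(X)$. Because $x \in \{x\} \cap A$, we have $\{x\} \in A^{-}$, and the inclusion $A^{-} \subseteq H^{\doublewedge}$ forces $\{x\} \in H^{\doublewedge}$, i.e.\ $\{x\} \sn H$. Now apply axiom N2 of an almost proximity, which says $\{x\} \sn H \Rightarrow \{x\} \cap H \neq \emptyset$. This yields $x \in H$, contradicting the choice $x \in A \setminus H$. Hence no such $x$ exists and $A \subseteq H$.

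There is essentially no obstacle here: the argument is a single-point test via the canonical injection $i(x) = \{x\}$ that was already exploited in the preceding admissibility theorem. The only subtlety worth flagging is that one must use N2 (and \emph{not} the openness of $H$ or any strong-inclusion machinery), since strong nearness is strictly stronger than ordinary nearness for a Lodato proximity and we need the conclusion $\{x\} \cap H \neq \emptyset$ directly. Note also that the openness of $A$ is not actually used in the argument; only the openness of $H$ (implicit in the definition of $H^{\doublewedge}$ as a subbasic open set) and the $T_1$ assumption on $X$ matter.
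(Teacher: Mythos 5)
Your proof is correct and is essentially the paper's own argument: pick a point $x\in A\setminus H$, note that the closed singleton $\{x\}$ (here the $T_1$ hypothesis enters) lies in $A^{-}$, and use axiom N2 to contradict $\{x\}\in H^{\doublewedge}$. The paper phrases it as a direct contradiction and leaves the role of $T_1$ implicit, but the single-point test via N2 is identical.
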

\begin{proof}
By contradiction, suppose $A \not\subseteq H$. Then there exists $a \in A$ such that $a \not\in H$. Hence $\{a\} \cap A \neq \emptyset$ while ${a} \not\sn H $ by property $(N2)$. This is absurd.
\end{proof}

\setlength{\intextsep}{0pt}
\begin{wrapfigure}[9 ]{R}{0.48\textwidth}
%\fbox{
\begin{minipage}{4.9 cm}
\begin{center}
\begin{pspicture}
 %[showgrid=true]
 (0.0,1.5)(2.5,3.5)
%\psset{linewidth=0.5pt,linecolor=blue}
\psframe[linecolor=black](-1.2,0.3)(4.8,3.55)
\pscircle[linecolor=black,linestyle=solid,dotsep=0.05,linewidth=0.05,style=Tgreen](0.52,1.95){1.51}
\pscircle[linestyle=solid,dotsep=0.05,linecolor=black,linewidth=0.05,style=Tgray](0.52,1.95){1.08}
\pscircle[linestyle=solid,dotsep=0.05,linecolor=black,linewidth=0.05,style=Tyellow](0.52,1.95){0.68}
\psellipse[linecolor=black,linestyle=dotted,dotsep=0.05,linewidth=0.05,style=Tyellow](2.75,2.25)(1.45,0.50)
\psellipse[linestyle=dotted,dotsep=0.05,linecolor=black,linewidth=0.05,style=Tgray](2.75,2.25)(1.52,1.10)
\psline[linestyle=solid,linecolor=black,linewidth=0.05]{-}(0.52,1.95)(2.12,1.95)
\psline[linestyle=solid,linecolor=black,linewidth=0.05]{-}(0.52,1.95)(0.52,3.50)
\rput(-0.8,3.3){\footnotesize  $\boldsymbol{X}$}
\rput(0.34,1.95){\tiny $\boldsymbol{O}$}
\rput(0.52,1.65){\tiny $\boldsymbol{B(O,1)}$}
\rput(0.41,3.10){\footnotesize $\boldsymbol{s}$}
\rput(0.52,1.15){\tiny $\boldsymbol{S^1(O,s)}$}
\rput(0.52,0.65){\footnotesize $\boldsymbol{\cl E}$}
\rput(2.88,2.25){\tiny  $\boldsymbol{\Int A}$}
\rput(2.88,1.55){\tiny  $\boldsymbol{\Int H}$}
\rput(0.28,0.00){\qquad\qquad\qquad\qquad\footnotesize 
                 $\boldsymbol{\mbox{Fig.}\ 3.2.\  (C = B(O,1)\cup S^1(O,s))\not{\in}H^\doublewedge}$}
 \end{pspicture}
%\caption{\footnotesize $\boldsymbol{A\sn B, C\sn E}$}
%\label{fig:stronglyNear}
\end{center}
\end{minipage}
%}
\end{wrapfigure}
\setlength{\intextsep}{2pt}

Now let ${\tau_\delta}^{-}$ be the hypertopology having as subbase the sets of the form $V^{-}$, where $V$ is an open subset of $X$, and let $({\tau^\doublewedge})^-$ the hypertopology having as subbase the sets of the form $V^\doublewedge$, again with $V$ an open subset of $X$.

\begin{theorem}
Let $X$ be a $T_1$ topological space. The hypertopologies ${\tau_\delta}^{-}$ and $({\tau^\doublewedge})^-$ are not comparable.
\end{theorem}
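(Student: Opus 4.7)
The plan is to produce counterexamples to both inclusions using $X = \mathbb{R}$ with its usual topology and an almost proximity of the form of Example~\ref{ex2}, augmented by N5 and N6. The guiding idea is that $A\ \sn\ B$ demands an interior (bulk) overlap, whereas $A \cap B \neq \emptyset$ can be realized by a single point of contact; it is exactly this mismatch that will break both inclusions.

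For $\tau_\delta^{-} \not\subseteq (\tau^{\doublewedge})^{-}$, I would take $V = (0,2)$ and $E = [-1,0] \cup \{1\}$: the contact point $1$ places $E$ in $V^{-}$. Any basic $(\tau^{\doublewedge})^{-}$-neighborhood of $E$ has the form $\bigcap_{i=1}^{n} W_{i}^{\doublewedge}$ with $E\ \sn\ W_{i}$ for each $i$, which by Example~\ref{ex2} forces every open $W_{i}$ to meet $\Int E = (-1,0)$. Choosing $F := [-1,0]$, we have $\Int F \cap W_{i} = (-1,0) \cap W_{i} \neq \emptyset$, hence $F\ \sn\ W_{i}$ and $F$ sits in the chosen neighborhood; but $F \cap V = \emptyset$ shows $F \notin V^{-}$. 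So no basic $(\tau^{\doublewedge})^{-}$-neighborhood of $E$ is contained in $V^{-}$. This is a one-dimensional analogue of the configuration sketched in Fig.~3.2, where contact occurs only on the thin outer sphere.

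For $(\tau^{\doublewedge})^{-} \not\subseteq \tau_\delta^{-}$, I would take $V = (-1,2)$ and $E = [0,1]$, so $\Int E \cap V = (0,1) \neq \emptyset$ places $E$ in $V^{\doublewedge}$. A basic $\tau_\delta^{-}$-neighborhood of $E$ is $\bigcap_{i=1}^{n} W_{i}^{-}$ with $E \cap W_{i} \neq \emptyset$; choose $x_{i} \in E \cap W_{i}$ and one further point $z$ distinct from all $x_{i}$, and set $F := \{x_{1}, \ldots, x_{n}, z\}$. Then $F$ is finite (hence closed), hits every $W_{i}$ through $x_{i}$, and is a non-singleton with empty interior, so by Example~\ref{ex2} we get $F\ \not\sn\ V$ and thus $F \notin V^{\doublewedge}$.

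The main obstacle is not the constructions themselves but the bookkeeping around Example~\ref{ex2}, which, as the authors note, is not strictly an almost proximity in the axiomatic sense (it fails N0 on sets with empty interior): one must separate the singleton case (governed by N5) from the bulk case, and avoid spurious appeals to N0. Once this is done, the two examples above simultaneously witness both non-inclusions.
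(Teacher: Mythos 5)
Your proof is correct, and it establishes both non-inclusions by the same underlying mechanism as the paper --- the almost proximity of Example~\ref{ex2}, where strong nearness requires an overlap of interiors while membership in $V^-$ needs only a point of contact --- but your witnesses and bookkeeping differ in useful ways. For ${\tau_\delta}^{-} \not\subseteq (\tau^\doublewedge)^-$ the paper simply takes a closed $E$ with $\Int(E)=\emptyset$, so that $E$ lies in no $H^\doublewedge$ whatsoever; you instead take $E=[-1,0]\cup\{1\}$, which does have interior but meets $V$ only at an isolated point, and you exhibit an explicit closed set $F=[-1,0]$ lying in every basic $(\tau^\doublewedge)^-$-neighborhood of $E$ yet outside $V^-$. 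For the reverse direction the paper works in $\mathbb{R}^2$, invokes Lemma~\ref{lem} to reduce to $A\subseteq H$, and escapes $H^\doublewedge$ with the configuration $C=B(O,1)\cup S^1(O,s)$ of Fig.~3.2; you avoid the lemma and the planar picture entirely by using finite sets, which are closed in a $T_1$ space and have empty interior, as the escaping witnesses. Two further points in favour of your version: you argue against arbitrary basic neighborhoods $\bigcap_{i=1}^n W_i^{\doublewedge}$ and $\bigcap_{i=1}^n W_i^-$, whereas the paper's argument is phrased at the level of a single subbasic set (its Lemma~\ref{lem} concerns one $A$ with $A^-\subseteq H^\doublewedge$, not a finite intersection), and you are explicit about keeping $F$ a non-singleton so that the singleton convention attached to $(N5)$--$(N6)$ never interferes. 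What the paper's route buys is the reusable Lemma~\ref{lem} and a geometrically suggestive picture; what yours buys is a more elementary, one-dimensional argument that is tighter on the subbase-versus-base distinction. Your parenthetical caveat that Example~\ref{ex2} does not literally satisfy $(N0)$ on sets with empty interior is accurate, but it affects the paper's proof just as much as yours, since both use only the defining equivalence $A \sn B \Leftrightarrow \Int(A)\cap\Int(B)\neq\emptyset$.
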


\begin{proof}
First, we prove that ${\tau_\delta}^{-} \not\subset({\tau^\doublewedge})^-$. Take $A^- \in {\tau_\delta}^{-}$ and $E \in A^-$, with $E \in \CL(X)$. We ask whether there exists an open set $H$ such that $E \in H^\doublewedge \subset A^-$. To this purpose, consider the almost proximity of example \ref{ex2} and let $E$ be such that $\Int(E)= \emptyset$. Then $E \cap A \neq \emptyset$, but $\Int(E) \cap H = \emptyset$ for each $H \subset X$.

Conversely, we want to prove that $({\tau^\doublewedge})^- \not\subset {\tau_\delta}^{-}$. Suppose $X = \mathbb{R}^2$ and consider, as before, the almost proximity of example $\ref{ex2}$. Take the closed set $E=B(O,2)$, that is the closed ball with the origin as center and radius 2, and the open set $H$ as in Fig. 2.2. So $E \sn H$. By lemma \ref{lem}, to identify an open set $A$ such that $E \in A^- \subseteq H^\doublewedge$, we need to consider $A \subseteq H$. But now it is possible to choose $C \in A^-$ such that $C \not\in H^\doublewedge$, for each $A \subset H$. For example, we can take $C = B(O,1) \cup S^1(O,s)$, where $S^1(O,s)$ is the circumference with the origin as center and radius $s$ for a suitable value of $s$ (see, {\em e.g.}, Fig. 3.2). \\

\end{proof}

%\bibliographystyle{amsplain}
%\bibliography{NSrefs}

\end{document}